\theoremstyle{plain}
\newtheorem{theorem}{Theorem}[section]
\newtheorem{lemma}[theorem]{Lemma}
\newtheorem{corollary}[theorem]{Corollary}
\theoremstyle{remark}
\newtheorem*{remark}{\it Remark\/}
\newtheorem*{remarks}{\it Remarks\/}
\theoremstyle{definition}
\newtheorem{definition}[theorem]{Definition}
\newcommand{\AmX}{A^{<m_X}}
\newcommand{\Cf}{\textit{cf.}\;}
\newcommand{\cO}{\mathcal{O}}
\newcommand{\ee}{\boldsymbol{e}}
\newcommand{\gi}{g^{(i)}}
\newcommand{\gr}{\operatorname{gr}}
\newcommand{\grA}{\gr(A)}
\newcommand{\griA}{\gr^i(A)}
\newcommand{\griL}{\gr^i(L)}
\newcommand{\griLM}{\gr^i(L/M)}
\newcommand{\griM}{\gr^i(M)}
\newcommand{\griR}{\gr^i(R)}
\newcommand{\grL}{\gr(L)}
\newcommand{\grLM}{\gr(L/M)}
\newcommand{\grM}{\gr(M)}
\newcommand{\grR}{\gr(R)}
\newcommand{\Ie}{\textit{i.e.}\ }
\newcommand{\ideal}[1]{\langle{#1}\rangle}
\newcommand{\isom}{\overset{\simeq}{\longrightarrow}}
\newcommand{\IX}{\mathfrak{i}_X}
\newcommand{\lc}{\operatorname{lc}}
\renewcommand{\le}{\leqslant}
\newcommand{\length}{\mathrm{length}}
\renewcommand{\leq}{\leqslant}
\newcommand{\lp}{\operatorname{lp}}
\newcommand{\lt}{\operatorname{lt}}
\newcommand{\Lt}{\operatorname{Lt}}
\newcommand{\m}{\mathfrak{m}}
\newcommand{\minus}{\smallsetminus}
\renewcommand{\mod}{\,\mathrm{mod}\,}
\renewcommand{\O}{\cO}
\newcommand{\oplusa}{\oplus_{0\leq i<a}}
\newcommand{\oplusm}{\oplus_{0\leq i<m}}
\newcommand{\p}{\varpi}
\newcommand{\rank}{\mathrm{rank}}
\newcommand{\ssm}{\smallsetminus}
\newcommand{\suma}{\sum_{0\leq i<a}}
\newcommand{\TG}{T_G}
\newcommand{\toF}{\overset{F}{\to}}
\newcommand{\ToF}{\overset{F}{\Rrightarrow}}
\newcommand{\ToG}{\overset{G}{\Rrightarrow}}
\newcommand{\x}{\boldsymbol{x}}
\newcommand{\Xtil}{\widetilde{X}}
\newcommand{\XX}{\mathbb{X}}
\newcommand{\XXe}{\XX_{\ee}}
\newcommand{\Z}{\mathbb{Z}}
\newcommand{\zero}{\{0\}}
\def\sn{\smallskip\noindent}
\def\mn{\medskip\noindent}
\def\bn{\bigskip\noindent}
\begin{document}
\begin{flushright}
Version 2009.4.23
\end{flushright}
\begin{center}
{\huge 
Flat modules and Gr\"obner bases 
over truncated discrete valuation rings}

\vskip 1cm
{\Large 
Toshiro Hiranouchi%\footnote{%%%%%%%%%%%%%%%%%%%%%%%%%%% Support
%} 
and 
Yuichiro Taguchi%\footnote{%%%%%%%%%%%%%%%%%%%%%%%%%%%%% Support
%}
}
\end{center}
%%%%%%%%%%%%%%%%%%%%%%%%%%%%%%%%%%%%%%%%%%%%%%%%%%%%%%% Abstract
\begin{abstract}
We present basic properties of Gr\"obner bases of 
submodules of a free module of finite rank over a 
polynomial ring  $R$  with coefficients in a 
graded truncated discrete valuations ring  $A$. 
As an application, 
we give a criterion for a finitely generated $R$-module 
to be flat over  $A$.
Its non-graded version is also given. 
\end{abstract}

%%%%%%%%%%%%%%%%%%%%%%%%%%%%%%%%%%%%%%%%%%%%%%%%%%%%%% Introduction
\section{Introduction}
\label{sec:Introduction}
A truncated discrete valuation ring 
(abbreviated as {\it tdvr} in the following) 
is a commutative ring which is isomorphic to 
a quotient of finite length 
of a discrete valuation ring 
(equivalently, it can be defined to be 
an Artinian local ring whose maximal ideal is 
generated by one element, \Cf \cite{HT1}, Prop.\ 2.2). 
In this paper, 
we study Gr\"obner bases over graded tdvr's 
and their applications. 
In particular, we provide a flatness criterion for 
a module over a tdvr (Thm.\ \ref{thm:criterion}) 
in terms of certain numerical invariants 
calculated from a Gr\"obner basis for 
the associated graded module.
Such a study has been motivated by our work on 
the ramification theory of tdvr's 
(\cite{HT1}, \cite{HT2}). 

In Section \ref{sec:GB}, 
we recall (following \cite{AL94} and \cite{Lez08}) 
the general theory of Gr\"obner bases 
for submodules of a free module over a polynomial ring 
with an arbitrary Noetherian coefficient ring  $A$. 
It is refined in Section \ref{sec:flat} in the case where  
$A$  is a graded tdvr, and we obtain a 
flatness criterion for graded modules over a graded tdvr 
(Thm.\ \ref{thm:flat}). 
A similar criterion in the case of usual modules over a 
tdvr is obtained in Section \ref{sec:tdvr} 
by reducing to the graded case.

%%%%%%%%%%%%%%%%%%%%%%%%%%%%%%%%%%%%%%%%%%%%%%%%%%% Convention
Throughout this paper, 
all rings are commutative. 

\medskip\noindent
{\it Acknowledgments.} %%%%%%%%%%%%%%%%%%%%%%%% Acknowledgments
We thank the organizers, 
Jaebum Sohn and Hisao Taya, of the 
11th Japan-Korea Joint Seminar at Sendai 
for inviting us to the Seminar and 
providing us with the opportunity to write up 
the results in this paper. We thank also 
Takafumi Shibuta and one of the referees 
for pointing out some errors in the first version of 
this paper and suggesting remedy. Thanks are also due to  
Seidai Yasuda 
for his critical comments on our use of standard bases. 
%%%%%%%%%%%%%% Support
The first author has been partially supported by 
GCOE (Kyoto University)  
and 
JSPS KAKENHI \#21740015. 
The second author has been partially supported by 
JSPS KAKENHI \#19540036.
This work has also been supported by
JSPS Core-to-Core Program \#18005 (Coordinator: Makoto Matsumoto).

%%%%%%%%%%%%%%%%%%%%%%%%%%%%%%%%%%%%%%%%%%%%%%%%%%%%%%%%%%%%%%%%
%%%%%%%%%%%%%%%%%%%%%%%%%%%%%%%%%%%%%%%%%%%%%%%%%%%%%% Section 2
%%%%%%%%%%%%%%%%%%%%%%%%%%%%%%%%%%%%%%%%%%%%%%%%%%%%%%%%%%%%%%%%
\section{Gr\"obner bases over Noetherian rings}
\label{sec:GB}
We recall the theory of Gr\"obner bases for 
submodules of a free module of finite rank over a 
polynomial ring with coefficients in a 
Noetherian ring following Chapter 4 of \cite{AL94} 
and \cite{Lez08}. 

Let  
$A$  be a Noetherian ring and  
$R:=A[x_1,\ldots,x_n]$  the ring of polynomials in  
$n$  variables with coefficients in  $A$.
Let  $L$  be a free $R$-module of rank  $r\geq 1$. 
Fix an $R$-basis  $\ee=(e_1,\ldots,e_r)$  of  $L$.
Let 
\begin{align*}
  \XX  &:=\
  \{x_1^{m_1}\cdots x_n^{m_n}\ |\ 
	(m_1,\ldots,m_n)\in(\Z_{\ge 0})^n\}, \\
  \XXe &:=\ 
  \{x_1^{m_{l,1}}\cdots x_n^{m_{l,n}}e_l\ |\ 
	1\leq l\leq r,\ 
	(m_{l,1},...,m_{l,n})\in(\Z_{\ge 0})^n\}
\end{align*}
be the sets of all {\it power products} (or {\it monomials}) 
in  $R$  and  $L$, respectively.
Choose and fix a {\it term order}\;  $<$  on  $\XXe$; 
thus it is a total order on  $\XXe$  satisfying 
\\
(1) 
$X<\x X$  for 
any  $X\in\XXe$ and 
any  $\x\neq 1$  in  $\XX$; 
\\
(2) 
If     $X< Y$  in  $\XXe$, 
then  $\x X<\x Y$  for any  $\x\in\XX$.

\sn
It is known (\cite{Lez08}, Prop.\ 3) that 
any term order makes  $\XXe$  a well-ordered set. 
For  $X,Y\in\XXe$, we write   
$X\mid Y$  if there exists  $\x\in\XX$  such that  $Y=\x X$.

Any non-zero element  $f$  of  $L$  can be written uniquely as
$$
   f\ =\ a_1X_1+\cdots+a_sX_s
$$
with  
$$
     a_i\in A\minus\zero \qquad\text{and}\qquad
     X_1>\cdots>X_s \quad\text{in }\ \XXe.
$$
Then we set  
$\lp(f)=X_1$, 
$\lc(f)=a_1$  and  
$\lt(f)=a_1X_1$; these are called the 
{\it leading power product} (or {\it leading monomial}),
{\it leading coefficient}  and  
{\it leading term} of  $f$, respectively.

%%%%%%%%%%%%%%%%%%%%%%%%%%%%%%%%%%%%%%%%%%%%%%%%%%%%%% f\ToF g の定義
\begin{definition}
\label{def:module-reduce}
Let  $f,h$  be two elements of  $L$  and  
$F=\{f_1,\ldots,f_s\}$  a finite subset of  $L\minus\zero$. 

\sn
(i) 
We write 
$f\toF h$ if 
there exist 
$a_1,\ldots ,a_s \in A$,  and 
$\x_1,\ldots,\x_s \in \XX$ satisfying 

\sn
\quad$\bullet$ 
$h=f-(a_1\x_1f_1 + \cdots + a_s \x_s f_s)$, 
\\
\quad$\bullet$ 
$\lp(f)=\x_i\lp(f_i)$  for all  $i$  such that  $a_i\neq 0$, and 
\\
\quad$\bullet$ 
$\lt(f)=a_1\x_1\lt(f_1)+\cdots+a_s\x_s\lt(f_s)$.  

\sn
(ii) 
We say that  $f$  {\it reduces to  $h$  modulo  $F$}, 
and write  $f \ToF h$,  
if there exist finitely many elements  
$h_1,\ldots,h_t\in L$  such that 
$$
  f\toF h_1 \toF h_2 \toF \cdots \toF h_t \toF h.
$$
We say that  $f$  reduces {\it strictly} to  $h$  if  
$f\ToF h$  and  $\lp(h)<\lp(f)$. 
\end{definition}

\begin{remarks} %%%%%%%%%%%%%%%%%%%%%%%%%%%%%%%%%%%%%%%%%% Remarks
(1)
If  $f \ToF h$  for  $f,h\in L$, 
then we have  $f\equiv g \mod \ideal{F}$, where  
$\ideal{F}$  denotes the $R$-submodule of  $L$  generated by  $F$. 

\sn
(2)
For any  $f$  and  $F$  as above, 
there exists a ``minimal reduction''  of  $f$  modulo  $F$, 
by which we mean an element  $f_0\in L$
to which  $f$  reduces modulo  $F$  
and which does not reduce strictly any more.  
This can be proved by induction on  $\lp(f)$, 
upon noticing the fact that the set  $\XXe$  is 
well-ordered with respect to  $<$.
\end{remarks}

For a subset  $G\subset L$, we denote by  
$\Lt(G)$  the submodule of  $L$  generated by the leading terms 
of all elements in  $G$.

The following theorem is fundamental for our purposes:

%%%%%%%%%%%%%%%%%%%%%%%%%%%%%%%%%%%%%%%%%%%%%%%%%%% 基本定理 on Ｇ基底
\begin{theorem}[\cite{Lez08}, Thm.\ 14, Cor.\ 15]
\label{thm:GBM}
Let  $M$  be a non-zero $R$-submodule of  $L$. 
Then there exists a finite subset  $G=\{g_1,\ldots,g_t\}$  of  
$M\minus\zero$  satisfying the following equivalent conditions:

\sn
$\mathrm{(a)}$ 
$\Lt(G) = \Lt(M)$.

\sn
$\mathrm{(b)}$ 
For any  $f\in L$, we have 
$f\in M$  if and only if  $f\ToG 0$.

\sn
$\mathrm{(c)}$ 
For any  $f\in M$, 
there exist  $h_1,\ldots,h_t\in R$  such that 
$f = h_1g_1+ \cdots + h_t g_t$  with 
$\lp(f) = \max_{1\le i \le t} (\lp(h_i) \lp(g_i))$. 
\end{theorem}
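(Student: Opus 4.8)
The plan is to prove the existence of $G$ first, and then to establish the equivalence of (a), (b), (c) in the cyclic order $\mathrm{(a)}\Rightarrow\mathrm{(b)}\Rightarrow\mathrm{(c)}\Rightarrow\mathrm{(a)}$. For existence: since $A$ is Noetherian, $R=A[x_1,\ldots,x_n]$ is Noetherian by the Hilbert basis theorem, hence $L$ is a Noetherian $R$-module and the submodule $\Lt(M)\subset L$ is finitely generated. Each generator may be taken to be the leading term $\lt(g)=\lc(g)\,\lp(g)$ of some element $g\in M\minus\zero$; choosing finitely many such $g$'s whose leading terms generate $\Lt(M)$ gives a finite set $G=\{g_1,\ldots,g_t\}\subset M\minus\zero$ with $\Lt(G)=\Lt(M)$, which is condition (a). This is the easy part; the substance of the theorem is the equivalence of the three conditions, and I would prove it for any finite $G\subset M\minus\zero$.

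For $\mathrm{(a)}\Rightarrow\mathrm{(b)}$: the implication ``$f\ToG 0\Rightarrow f\in M$'' is immediate from Remark (1), since $\langle G\rangle\subset M$. For the converse, given $f\in M\minus\zero$, take a minimal reduction $f_0$ of $f$ modulo $G$ (which exists by Remark (2), using that $\XXe$ is well-ordered). I claim $f_0=0$. If not, then $f_0\in M$ by Remark (1), so $\lt(f_0)\in\Lt(M)=\Lt(G)$, and hence $\lt(f_0)$ lies in the submodule generated by the $\lt(g_i)$; expanding $\lt(f_0)$ as an $R$-combination of the $\lt(g_i)$ and comparing leading power products, one extracts coefficients $a_i\in A$ and power products $\x_i\in\XX$ realizing a one-step reduction $f_0\toG h$ with $\lp(h)<\lp(f_0)$, contradicting minimality. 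Hence $f_0=0$ and $f\ToG 0$. The delicate point here — and the step I expect to be the main obstacle — is the bookkeeping with coefficients in the \emph{non-domain} ring $A$: one must check that membership of $\lt(f_0)$ in $\Lt(G)$ genuinely produces a valid reduction step in the sense of Definition \ref{def:module-reduce}(i), i.e.\ that the three bulleted conditions (in particular the leading-term identity $\lt(f)=\sum a_i\x_i\lt(f_i)$) can be met; over a field this is automatic, but over $A$ one has to argue that the $e_l$-homogeneous component of the relevant generating relation already suffices and that no cancellation of leading coefficients is forced.

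For $\mathrm{(b)}\Rightarrow\mathrm{(c)}$: given $f\in M$, apply (b) to get $f\ToG 0$, i.e.\ a chain $f\toG h_1\toG\cdots\toG h_t\toG 0$; accumulating the coefficients $a_i\x_i$ used at each step into polynomials $h_i\in R$ yields $f=\sum_i h_ig_i$, and the leading-power-product condition in each single reduction step guarantees, by an induction on the length of the chain (using that $\lp$ is nonincreasing along reductions and the term-order axiom $X<\x X$), that $\lp(f)=\max_i\bigl(\lp(h_i)\lp(g_i)\bigr)$ with no cancellation among the top terms. For $\mathrm{(c)}\Rightarrow\mathrm{(a)}$: the inclusion $\Lt(G)\subset\Lt(M)$ is trivial since $G\subset M$; conversely, for any $f\in M\minus\zero$, writing $f=\sum h_ig_i$ as in (c) and picking an index $i$ with $\lp(h_i)\lp(g_i)=\lp(f)$, comparison of leading terms shows $\lt(f)$ is an $A$-combination (indeed an $\XX$-multiple, up to a unit-free coefficient) of terms of the form $\lp(h_i)\lt(g_i)$, whence $\lt(f)\in\Lt(G)$; as such $\lt(f)$ generate $\Lt(M)$, we get $\Lt(M)\subset\Lt(G)$. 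Throughout, the only real care needed is the coefficient arithmetic over the general Noetherian ring $A$; the order-theoretic inductions are routine given that $\XXe$ is well-ordered.
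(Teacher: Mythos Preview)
The paper does not prove this theorem: it is quoted with a citation to \cite{Lez08}, Thm.\ 14 and Cor.\ 15, and no argument is given in the text. So there is no ``paper's own proof'' to compare against; your proposal stands on its own as a reconstruction of the cited result.

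Your argument is correct and follows the standard route. A couple of points where your sketch is slightly loose but easily repaired: in $\mathrm{(a)}\Rightarrow\mathrm{(b)}$, the passage from $\lt(f_0)\in\Lt(G)$ to an actual reduction step is exactly the observation that, writing $\lt(f_0)=cX$ and expanding $cX=\sum r_i\,\lt(g_i)$, the coefficient of $X$ on the right is an $A$-linear combination $c=\sum a_ic_i$ over those $i$ with $X_i\mid X$; setting $\x_i=X/X_i$ gives the data for Definition~\ref{def:module-reduce}(i). In $\mathrm{(b)}\Rightarrow\mathrm{(c)}$, the reason the accumulated $h_i$ satisfy $\lp(f)=\max_i\lp(h_i)\lp(g_i)$ with no cancellation at the top is that a single step $f\toG h$ always has $\lp(h)<\lp(f)$ (the third bullet forces exact cancellation of the leading term), so the monomials $\x_i$ contributed at the first step are strictly larger than those contributed later; hence $\lp(h_i)$ is realized at the first step for each $i$ used there. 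In $\mathrm{(c)}\Rightarrow\mathrm{(a)}$, the precise statement is $\lt(f)=\sum_{i:\,\lp(h_i)\lp(g_i)=\lp(f)}\lc(h_i)\cdot(\lp(f)/X_i)\cdot\lt(g_i)\in\Lt(G)$, which is what ``comparison of leading terms'' amounts to. None of these require $A$ to be a domain; zero-divisors cause no trouble because the conditions are phrased in terms of $\lp(h_i)\lp(g_i)$ rather than $\lp(h_ig_i)$.
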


%%%%%%%%%%%%%%%%%%%%%%%%%%%%%%%%%%%%%%%%%%%%%%%%%%% 定義：Groebner基底 
\begin{definition}
\label{def:GB}
Let  $M$  be a non-zero $R$-submodule of  $L$.
A finite subset  $G$  of  $M\minus\zero$  as in the above theorem 
is called a {\it Gr\"obner basis for  $M$}.   
A finite subset  $G$  of  $L\minus\zero$  is called a 
{\it Gr\"obner basis} (in  $L$)  
if it is a Gr\"obner basis for some non-zero $R$-submodule of  $L$. 
\end{definition}

It follows immediately from the theorem that we have  
$M=\ideal{G}$  if  $G$  is a Gr\"obner basis 
for a non-zero $R$-submodule  $M$  in  $L$. 

A Gr\"obner basis is not unique for a given submodule. 
Indeed, if  $G$  is a Gr\"obner basis for  $M$, 
then any finite subset of  $M\minus\zero$  containing  $G$  
is again a Gr\"obner basis for  $M$ 
(this follows from (a) of Theorem \ref{thm:GBM}). 
In fact, even a minimal Gr\"obner basis (defined below) 
is not unique. 

%%%%%%%%%%%%%%%%%%%%%%%%%%%%%%%%%%%%%%%%%%%%%%% 定義：極小 Groebner 基底 
\begin{definition}[\cite{AL94}, Exer.\ 4.1.9]
\label{def:minimal}
A Gr\"obner basis  $G$  in  $L$  
is said to be {\it minimal}\/ 
if no  $g \in G$  can be strictly reduced with respect to  
$G \ssm \{g\}$.
\end{definition}

The minimality of a Gr\"obner basis  
$G=\{g_1,\ldots,g_t\}$  implies in particular that 
there exist no divisibility relations between 
the leading terms  $\lt(g_1),\ldots,\lt(g_t)$.  

Every Gr\"obner basis  
contains a minimal Gr\"obner basis. 
Indeed, 
if there is an element  $g\in G$  which can be strictly reduced 
with respect to  $G\ssm\{g\}$, we have  
$\lt(g)\in\Lt(G\ssm\{g\})$  (Def.\ \ref{def:module-reduce}). 
Hence  $\Lt(G)=\Lt(G\ssm\{g\})$, and 
$G\ssm\{g\}$  is also a Gr\"obner basis 
(\Cf Thm.\ \ref{thm:GBM}). 
Repeating this process finite times, we reach a 
minimal Gr\"obner basis.

In the rest of this section, 
let  $M$  be a non-zero $R$-submodule of  $L$. 
Choose a Gr\"obner basis  $G=\{g_1,\ldots ,g_t\}$  for  $M$. 
For each  $j$, write  $g_j=c_jX_j+\ldots$  with 
$c_j=\lc(g_j)$  and  
$X_j=\lp(g_j)$. 
For each  $X\in\XXe$, let  
$\IX$  denote the ideal of  $A$  generated by  
$c_j$  for all  $j$  such that  $X_j\mid X$  
(if there are no such  $j$, put  $\IX=0$), and choose 
a complete set of coset representatives  $C_X$  for  $A/\IX$ 
containing  $0$.  
For each  $X\in\XXe$, 
choose and fix an element  $\Xtil\in L$  of the form  
\begin{equation}\label{eq:Xtil} %%%%%%%%%%%%%%%%%%%%%%%%%%% eq:Xtil
   \Xtil\ =\ X+h\qquad\text{with}\quad \lp(h)<X.
\end{equation}
Then the family  $(\Xtil)_{X\in\XXe}$  
forms an $A$-basis of  $L$. 
Define the set  $\TG$  of {\it totally reduced} vectors by 
$$
  \TG\ :=\ \left\{\sum_{X\in\XXe}c_X\Xtil\ \Big|\ c_X\in C_X \right\}  
  \qquad\subset\ L.
$$
The set  $\TG$  depends on the choice of  $G$,   
$(C_X)_{X\in \XXe}$, and  $(\Xtil)_{X\in\XXe}$. 
We write this set  $\TG$  formally as 
$$
  \TG\ =\ \bigoplus_{X\in \XXe} C_X\Xtil.
$$

%%%%%%%%%%%%%%%%%%%%%%%%%%%%%%%%%%%%%%%%%%%%%%%%% 定理：加群版互除法
\begin{theorem}[\Cf\cite{AL94}, Thm.\ 4.3.3]
\label{thm:normal form}
The projection map  $L\to L/M$  induces a bijection of sets 
$$
   \rho:\ \TG\ \isom\ L/M.
$$
\end{theorem}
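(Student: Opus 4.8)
The plan is to show that $\rho$ is both surjective and injective by exploiting the well-ordering of $\XXe$ together with the minimal-reduction machinery from Section~\ref{sec:GB}. For surjectivity, given $f \in L$, I would produce an element of $\TG$ congruent to $f$ modulo $M$ by a transfinite-style descent on $\lp$. Writing $f$ in the $A$-basis $(\Xtil)_{X\in\XXe}$ as $f = \sum_X a_X \Xtil$, let $Y$ be the largest $X$ with $a_X \notin C_X$ (if none exists, $f \in \TG$ already and we are done). The key observation is that if $a_Y \notin C_Y$, then $a_Y \notin I_Y$ in the trivial case, but more usefully we can subtract a suitable $A$-combination of the $g_j$ with $X_j \mid Y$: since $\IX[Y]$ is generated by those $c_j$, we may write $a_Y = c_Y' + \sum (\text{coeff}) c_j$ with $c_Y' \in C_Y$, and subtracting $\sum (\text{coeff}) \x_j g_j$ (where $\x_j \lp(g_j) = Y$) replaces the coefficient of $\Xtil[Y]$ by $c_Y' \in C_Y$ while only altering coefficients of $\Xtil[X]$ with $X < Y$. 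This is exactly a reduction step in the sense of Definition~\ref{def:module-reduce}. Since $\XXe$ is well-ordered, iterating this cannot go on forever — at each stage the largest ``bad'' index strictly decreases — so after finitely many steps (this finiteness needs the usual argument: a strictly decreasing sequence in a well-ordered set is finite) we land in $\TG$. This gives a well-defined element of $\TG$ in the class $f + M$, hence surjectivity.

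For injectivity, suppose $f, f' \in \TG$ with $f - f' \in M$; I must show $f = f'$. Set $g := f - f' \in M$ and suppose $g \neq 0$. By Theorem~\ref{thm:GBM}(a) we have $\Lt(g) \in \Lt(M) = \Lt(G)$, so $\lt(g) = \lc(g)\lp(g)$ lies in the submodule generated by the $\lt(g_j)$; in particular there is some $j$ with $X_j \mid \lp(g) =: Y$ and, looking at the coefficient of $Y$, we get $\lc(g) \in \IX[Y]$. On the other hand, expanding $f = \sum_X c_X \Xtil$ and $f' = \sum_X c_X' \Xtil$ with $c_X, c_X' \in C_X$, and writing $\Xtil = X + (\text{lower})$, the coefficient of the monomial $Y$ in $g = f - f'$ (after re-expanding in the monomial basis $\XXe$) is $c_Y - c_Y'$ \emph{plus} contributions of the form coefficient-of-$\Xtil[X]$ times coefficient-of-$Y$-in-$X+h_X$ for $X > Y$ — but by \eqref{eq:Xtil} each such $\Xtil[X]$ only involves monomials $\leq X$ and strictly below $X$ apart from $X$ itself, so a monomial $Y < X$ can only appear from those $X > Y$. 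To make the bookkeeping clean I would instead argue by descent again: let $Y$ be the largest monomial appearing in $g$, i.e. $Y = \lp(g)$; the coefficient of $Y$ in $g$ equals $c_Y - c_Y'$ exactly, because any $\Xtil[X]$ with $X$ contributing monomial $Y$ and $c_X \neq c_X'$ would force $X \geq Y$, and if $X > Y$ then $c_X = c_X'$ would be forced by maximality of $Y$ unless the $\Xtil[X]$-terms themselves cancel... Cleaner: among all $X$ with $c_X \neq c_X'$, let $Y$ be the largest. Then in $g = \sum_X (c_X - c_X')\Xtil$, the coefficient of the monomial $Y$ is precisely $c_Y - c_Y'$ (terms $\Xtil[X]$ with $X > Y$ have $c_X = c_X'$, terms with $X < Y$ contribute only monomials $< Y$). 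Hence $\lp(g) = Y$ and $\lc(g) = c_Y - c_Y'$. But then $c_Y - c_Y' \in \IX[Y]$, and since $c_Y, c_Y'$ both lie in the coset-representative set $C_Y$ for $A/\IX[Y]$ which meets each coset exactly once, $c_Y \equiv c_Y' \bmod \IX[Y]$ forces $c_Y = c_Y'$ — a contradiction. Therefore $g = 0$, i.e. $f = f'$, proving injectivity.

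Finally, one should check the preliminary assertion, used implicitly above, that $(\Xtil)_{X \in \XXe}$ is indeed an $A$-basis of $L$: this follows because the change-of-basis ``matrix'' from $(\Xtil)_X$ to the monomial basis $(X)_X$ is, after restricting to any finite down-set of $\XXe$, unipotent upper-triangular with respect to the well-order (by \eqref{eq:Xtil}), hence invertible over $A$; passing to the limit over finite down-sets gives the claim for all of $L$. The main obstacle is organizing the descent arguments so that finiteness is transparent — one must repeatedly invoke that strictly decreasing sequences in the well-ordered set $\XXe$ terminate, and be careful that each ``correction'' step only modifies coefficients strictly below the monomial being corrected (which is exactly the content of the leading-term conditions in Definition~\ref{def:module-reduce} and of \eqref{eq:Xtil}). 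Everything else is linear bookkeeping once the well-ordering is used to stratify $L$ by leading power product.
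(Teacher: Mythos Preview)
Your proof is correct and follows essentially the same route as the paper's: surjectivity by well-ordered descent on $\lp$, at each step adjusting the leading coefficient into $C_X$ via an $M$-correction, and injectivity by observing that the leading coefficient of a nonzero difference $f-f'$ is $c_Y-c_Y'\notin\IX[Y]$, so $\lt(f-f')\notin\Lt(G)=\Lt(M)$. The paper's write-up is terser (it peels off leading monomials directly rather than rewriting everything in the $\Xtil$-basis first), but the underlying argument is the same.
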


\begin{proof}
First we prove the injectivity of  $\rho$. 
Let  $f$  and  $h$  be two different elements of  $\TG$. 
Then any non-zero term of  $f-h$  has coefficient  
$\not\equiv 0\pmod{\IX}$, and hence  $f-h$  cannot be 
reduced strictly modulo  $G$  any more. 
By Theorem \ref{thm:GBM}, 
we have  $f-h\not\in M$.

Next we prove the surjectivity of  $\rho$. 
Given an  $f\in L$, we shall find an  $f_0\in\TG$  
such that  $f\ToG f_0$  by induction on  $\lp(f)$. 
This is trivial if  $f=0$. 
Suppose  $f\not=0$. Let  
$c=\lc(f)$  and  $X=\lp(f)$. 
Then there exists  $c_X\in C_X$  such that  
$c\equiv c_X$ (mod $I_X$). Write  
$c=c_X+\sum_jd_jc_j$  with  
$d_j\in A$, the sum being over those  $j$  
such that  $X_j\mid X$.
Write  $X=\x_jX_j$  with  $\x_j\in\XX$  for such  $j$. 
Then we have 
$$
   f\ =\ c_XX + \sum_jd_j\x_jg_j + f_1
$$
with some  $f_1\in L$  such that  
$\lp(f_1)<\lp(f)$. 
If  
$\Xtil=X+h$  with  $\lp(h)<X$, then 
$$
   f\ =\ c_X\Xtil + \sum_jd_j\x_jg_j + f_2
$$
with  $f_2:=f_1-c_Xh$. 
Since  $\lp(f_2)<\lp(f)$,  
the induction proceeds.
\end{proof}

%%%%%%%%%%%%%%%%%%%%%%%%%%%%%%%%%%%%%%%%%%%%%%%%%%%%%%%%%%%%%%%%
%%%%%%%%%%%%%%%%%%%%%%%%%%%%%%%%%%%%%%%%%%%%%%%%%%%%%% Section 3
%%%%%%%%%%%%%%%%%%%%%%%%%%%%%%%%%%%%%%%%%%%%%%%%%%%%%%%%%%%%%%%%
\section{Flatness of modules over a graded tdvr}
\label{sec:flat}
Let  $k$  be a field. 
In this section, we consider the case where 
the base ring  $A$  is 
a {\it graded tdvr}, 
by which we mean a graded ring  
$A=\oplus_{0\leq i<a}A_i$  
of length\footnote{%%%%%%%%%%%%%%%%%%%%%%%%%%%%%%%%%%%%%%% 脚注
Here and elsewhere, the direct sum  
$\oplus_{0\leq i<a}$  means the usual 
infinite direct sum if  $a=\infty$.
}
$a \le \infty$  such that the degree-$i$ submodule  $A_i$  is 
a one-dimensional vector space over $A_0=k$  
generated by  $(A_1)^i$. 
For example, the graded ring associated with a tdvr  
is a graded tdvr (see Sect.\ \ref{sec:tdvr}). 
By choosing a non-zero element  $\p\in A_1$, we may and do 
identify  $A$  with the truncated polynomial algebra  
$k[\p]/(\p^a)$  
(or the localized polynomial algebra  $k[\p]_{(\p)}$
if  $a=\infty$), and 
identify  $A_i$  with  $k\p^i$. 
Then any element  $x$  of  $A$  can be written as  
$x=u\p^i$  with  
$u\in A^\times$, and 
$0\leq i\leq a$
(with the convention  $0^0=1$  if  $a=1$). 
The exponent  $i\leq a$  is unique 
(we regard  $0=\p^\infty$  if  $a=\infty$); 
we denote it by  $v(x)$  and call it the {\it valuation} of  $x$. 
%Thus we have a map  
%$v:A\to\{0,\ldots,a\}$; we have  
%$v(x)=0$  if and only if  $x$  is a unit of  $A$, 
%$v(x)=1$  if and only if  $x$  is a uniformizer of  $A$, and 
%$v(x)=a$  if and only if  $x=0$.
Note that, if  $v(x)=i$, 
then we have  $ux\in A_i$  for some unit  $u\in A^\times$.

Let  $R$  be the polynomial ring  $A[x_1,\cdots,x_n]$; 
we regard it as a graded  $A$-algebra  
$R=\oplus_{0\leq i<a}R_i$  by setting  
$R_i=\oplus_{\x\in\XX}A_i\x$. 
Similarly, let  $L$  be the free $R$-module  
$Re_1\oplus\cdots\oplus Re_r$  and regard it as a graded  $R$-module  
$L=\oplus_{0\leq i<a}L_i$  by setting  
$L_i=R_ie_1\oplus\cdots\oplus R_ie_r$. 
We have  
$R_i=\p^iR_0$  and  
$L_i=\p^iL_0$.
Any  $g\in L$  can be written uniquely as  
$g=\suma\gi$  with  $\gi\in L_i$; 
we call  $\gi$  the {\it degree-$i$ part} of  $g$. 
A non-zero element  $g$  of  $L$  is said to be 
{\it homogeneous} of degree  $i$  if  $g=\gi$  
(\Ie if it is in  $L_i$). 
When we say  $M=\oplusa M_i$  is a graded $R$-submodule of  $L$, 
we assume that  $M_i=M\cap L_i$. 
A Gr\"obner basis for a non-zero graded $R$-submodule of  $L$  
is said to be {\it homogeneous} if every element of it 
is homogeneous. 

%%%%%%%%%%%%%%%%%%%%%%%%%%%%%%%%%%%%%%%%%%%%%%%% Lemma: Homogeneous GB
\begin{lemma}
Any non-zero graded  $R$-submodule of  $L$  
has a homogeneous Gr\"obner basis.
\end{lemma}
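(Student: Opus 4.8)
The plan is to start from an arbitrary Gröbner basis for the given graded submodule $M = \oplusa M_i$ of $L$ and replace each of its elements by its homogeneous components, then show that the resulting (finite) set of homogeneous elements is still a Gröbner basis for $M$. First I would invoke Theorem~\ref{thm:GBM} to pick a Gröbner basis $G = \{g_1,\ldots,g_t\}$ for $M$; by the characterization (a) it suffices to produce a finite homogeneous subset $G'$ of $M \minus \zero$ with $\Lt(G') = \Lt(M)$. For each $j$ write $g_j = \suma g_j^{(i)}$ with $g_j^{(i)} \in L_i$ its degree-$i$ part, and set $G' := \{\, g_j^{(i)} \ \mid\ 1 \le j \le t,\ 0 \le i < a,\ g_j^{(i)} \ne 0 \,\}$, a finite set of homogeneous vectors.

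The first thing to check is that every $g_j^{(i)}$ lies in $M$. This uses the hypothesis that $M$ is graded in the strong sense $M_i = M \cap L_i$: since $L_i = \p^i L_0$ and $R_i = \p^i R_0$, the submodule $M$ being generated by homogeneous-compatible data forces its degree-$i$ parts back into $M$; more precisely, one observes that the decomposition $g = \suma \gi$ is the decomposition coming from the $A$-grading $A = \oplusa k\p^i$, and multiplying $g$ by suitable scalars $\p^m$ and taking $A$-linear combinations isolates each $\gi$, which then lies in $M \cap L_i = M_i$. So $G' \subset M \minus \zero$.

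The heart of the argument is the equality $\Lt(G') = \Lt(M)$. The inclusion $\subseteq$ is immediate since $G' \subset M$. For $\supseteq$ it is enough to show $\Lt(G) \subseteq \Lt(G')$, since $\Lt(G) = \Lt(M)$. So fix $j$ and consider $\lt(g_j) = c_j X_j$ where $c_j = \lc(g_j)$, $X_j = \lp(g_j)$. The key observation is that the leading term of $g_j$ is itself homogeneous: writing $c_j = u\p^{i_0}$ with $u \in A^\times$, the term $c_j X_j$ sits in $L_{i_0}$, and I claim $\lt(g_j^{(i_0)}) = \lt(g_j)$. Indeed, the monomials $\Xtil$-free description is not needed here; rather, since the grading is by the $\p$-power in the coefficient, the degree-$i_0$ part $g_j^{(i_0)}$ consists exactly of those terms $aX$ of $g_j$ with $v(a) = i_0$, and among these $X_j$ is still the largest power product (term orders only compare power products in $\XXe$, and $X_j$ was the largest over \emph{all} terms). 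Hence $\lp(g_j^{(i_0)}) = X_j$ and $\lc(g_j^{(i_0)}) = c_j$, so $\lt(g_j) = \lt(g_j^{(i_0)}) \in \Lt(G')$. This gives $\Lt(M) = \Lt(G) \subseteq \Lt(G') \subseteq \Lt(M)$, hence equality, and $G'$ is a homogeneous Gröbner basis for $M$ by Theorem~\ref{thm:GBM}(a).

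The step I expect to be the main obstacle is the second one: verifying cleanly that each homogeneous component $g_j^{(i)}$ genuinely lies in $M$. This is where the convention $M_i = M \cap L_i$ built into the definition of "graded $R$-submodule" must be used, and one has to be a little careful about the degenerate cases ($a = 1$, where the grading is trivial and the statement is vacuous, and $a = \infty$, where the sums are infinite but each individual $g$ still has only finitely many nonzero components since it is a polynomial combination of finitely many basis monomials with coefficients in $A$). Once that membership is granted, the leading-term bookkeeping in the last paragraph is routine, because passing to the degree-$i_0$ part only discards terms with coefficients of different valuation and never touches the leading power product.
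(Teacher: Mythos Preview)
Your overall strategy---replace each Gr\"obner basis element by homogeneous pieces and check condition (a) of Theorem~\ref{thm:GBM}---is sound, and close in spirit to the paper's proof. The paper does something slightly leaner: rather than keeping \emph{all} homogeneous components $g_j^{(i)}$, it multiplies each $g_j$ by a unit $u$ so that $\lc(ug_j)$ already lies in $A_{i_0}$ (with $i_0=v(\lc(g_j))$), and then keeps only the single component $(ug_j)^{(i_0)}$. That way $\lt((ug_j)^{(i_0)})=\lt(ug_j)$ on the nose, and one gets a homogeneous Gr\"obner basis of the same cardinality as $G$.

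There is, however, a genuine slip in your leading-term computation. You write that ``the degree-$i_0$ part $g_j^{(i_0)}$ consists exactly of those terms $aX$ of $g_j$ with $v(a)=i_0$'' and conclude $\lc(g_j^{(i_0)})=c_j$. This mis-describes the grading. The grading on $L$ comes from $A=\oplus_i k\p^i$ acting on each coefficient separately: if $g_j=\sum_X a_X X$ then $g_j^{(i)}=\sum_X a_X^{(i)}X$, where $a_X^{(i)}\in k\p^i$ is the degree-$i$ component of $a_X$. In particular $\lc(g_j^{(i_0)})=c_j^{(i_0)}$, the degree-$i_0$ part of $c_j$, which in general is \emph{not} $c_j$ (e.g.\ $c_j=1+\p$, $i_0=0$, $c_j^{(0)}=1$). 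The argument is easily repaired: since $v(c_j)=i_0$, the element $c_j^{(i_0)}$ is a unit multiple of $c_j$ in $A$, so $\lt(g_j)=c_jX_j$ lies in the $R$-submodule generated by $\lt(g_j^{(i_0)})=c_j^{(i_0)}X_j$, and hence in $\Lt(G')$. With this correction your inclusion $\Lt(G)\subseteq\Lt(G')$ goes through. (This is exactly the issue the paper's unit-multiplication trick is designed to sidestep.)

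A smaller point: your proposed mechanism for showing $g_j^{(i)}\in M$---``multiplying by suitable $\p^m$ and taking $A$-linear combinations isolates each $g^{(i)}$''---does not actually work over a tdvr, since you cannot divide by $\p$. Fortunately no such argument is needed: the hypothesis $M_i=M\cap L_i$ (which you correctly cite) says precisely that $M=\oplus_i M_i$ as an internal direct sum inside $L=\oplus_i L_i$, so the homogeneous components of any $g\in M$ automatically lie in $M$.
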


\begin{proof}
For any  $g\in L$, 
there exists a unit  $u$  of  $A$  such that  
$\lc(ug)\in A_i$  if  $i=v(\lc(g))$. 
If  $(ug)^{(i)}\in M_i$  is the degree-$i$ part of  $ug$, 
then we have  $\lt(ug)=\lt((ug)^{(i)})$. 
Now, if  $G$  is a Gr\"obner basis for  $M$, 
by (a) of Theorem \ref{thm:GBM}, 
we can replace each  $g\in G$  by  $(ug)^{(i)}$  as above  
to obtain a homogeneous Gr\"obner basis.
\end{proof}

Let  $M=\oplusa M_i$  be a non-zero graded $R$-submodule of  $L$, 
and fix a homogeneous Gr\"obner basis  
$G=\{g_1,\ldots,g_t\}$  for  $M$. 
For each  $j$, write  
$g_j = c_jX_j +\cdots$  with  
$c_j = \lc(g_j)$  and  
$X_j = \lp(g_j)$  as in the previous section. 
For each  $X\in\XXe$, let  
$\IX$  be the ideal of  $A$  generated by  
$c_j$  for all  $j$  such that  $X_j\mid X$. Denote by  
$m_X$  the length of  $A/\IX$  as an $A$-module. 
In other words, the ideal  $\IX$  is generated by  $\p^{m_X}$. 
Then we have 
\begin{equation}\label{eq:mX}
  m_X\ =\
    \begin{cases}
      \min\{v(c_j)\ |\ \text{$X_j$  divides  $X$}\},
         &\text{if there exists a  $j$  with  $X_j\mid X$}, \\
      a, &\mathrm{otherwise}.
    \end{cases}
\end{equation}
Set  
$A^{<m}:=\oplusm A_i$  
for each  $0\leq m\leq a$  
(if  $m=0$, then we put  $A^{<0}=\zero$); 
this is a $k$-subspace of  $A$, which, at the same time, 
we identify with the quotient 
$A/(\oplus_{i\geq m}A_i)$  
of  $A$  and then regard as a graded $A$-module.
For each  $X\in\XXe$, 
as the representative  $C_X$  for  $A/\IX$  in the 
previous section, we choose  
$C_X:=\AmX$.  
To construct a good set  $\TG$  of totally reduced vectors, 
we choose  $\Xtil=X+h$  as follows: 
$$
   \Xtil\ :=\ 
     \begin{cases}
       X         &\text{if  $m_X=0,a$},\\
       \x_jh_j   &\text{if  $m_X\not= 0,a$},
     \end{cases}
$$
where  
$j$  is an 
index such that  
$X_j\mid X$  and  $v(c_j)=m_X$, 
$\x_j$  is an element %%% \footnote{%%%%%%%%%%%%%%%%%%%%%%%%%% 脚注
% Caution: Do not confuse  $\x_j$  with  
% $x_j\in R=A[x_1,\ldots,x_n]$. 
of  $\XX$  such that  
$X=\x_jX_j$, and   
$h_j$  is an element of  $\XXe$  such that  
$g_j=c_jh_j$ 
(such  $\x_j$  exists because  $X_j\mid X$, and 
such  $h_j$  exists because  $g_j$  is homogeneous; 
the  $h_j$  is unique modulo $\m^{a-m_X}$). 
Note that this  $\Xtil$  has the form of (\ref{eq:Xtil}). 
The definition implies that  
$\p^{m_X}\Xtil\in M$. 
With these  $C_X$  and  $\Xtil$, 
we define
\begin{equation} %%%%%%%%%%%%%%%%%%%%%%%%%%%%%%%%%%%%%%%%% Eq: T_G
\label{eq:T_G}
   \TG\ :=\ \bigoplus_{X\in \XXe}\AmX\Xtil.
\end{equation}
This  $\TG$  depends not only on  $G$  but also on 
the choices made in finding  $\Xtil=\x_jh_j$  as above. 
It has a natural structure of graded $A$-module. 
It is flat as an $A$-module if and only if 
$m_X$  is either  $0$  or  $a$  for all  $X\in\XXe$.

%%%%%%%%%%%%%%%%%%%%%%%%%%%%%%%%%%%%%%%%%%%%%%%%% Lemma: T_G = L/M
\begin{lemma}\label{lem:T_G=L/M}
The map   
$\rho:\TG\isom L/M$  of Theorem \ref{thm:normal form} 
is an isomorphism of graded $A$-modules.
\end{lemma}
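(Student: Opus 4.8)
The plan is to upgrade the set-theoretic bijection $\rho\colon\TG\isom L/M$ of Theorem~\ref{thm:normal form} to an isomorphism of graded $A$-modules. Since $\rho$ is the restriction of the $R$-linear (in particular $A$-linear) projection $L\to L/M$, it is automatically $A$-linear \emph{once we know $\TG$ is an $A$-submodule of $L/M$'s source in a compatible way}; but $\TG$ is a priori only a subset of $L$, not an $A$-submodule, so the real content is: (i) to describe the $A$-module structure on $\TG$ referred to in the statement, and (ii) to check that $\rho$ respects both that $A$-action and the grading. So first I would make explicit that $\TG=\bigoplus_{X\in\XXe}\AmX\Xtil$ carries the evident $A$-module structure as a direct sum of the cyclic $A$-modules $\AmX\Xtil\cong A/\IX$, graded by declaring $\Xtil$ to sit in the degree determined by $X$ (recall $G$ is homogeneous, so each $g_j$, hence each $\Xtil=\x_jh_j$, is homogeneous of a well-defined degree, namely $\deg\x_j+\deg h_j=\deg\x_j+\deg g_j-v(c_j)=\deg\x_j+\deg g_j-m_X$).

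Next I would show $\rho$ is $A$-linear. The subtlety is that multiplying an element $\sum c_X\Xtil\in\TG$ by $a\in A$ inside $L$ need not land back in $\TG$ (the coefficients $ac_X$ may fall outside the chosen representatives $C_X=\AmX$), so one cannot simply say ``$\rho$ is the restriction of an $A$-linear map.'' Instead I would argue as follows: for $a\in A$ and $v\in\TG$, the element $av\in L$ maps under the projection $L\to L/M$ to the class $\overline{av}=a\,\overline{v}=a\,\rho(v)$; on the other hand, $av$ reduces modulo $G$ to a unique element $v'\in\TG$ (surjectivity part of Theorem~\ref{thm:normal form}), and by the first Remark after Definition~\ref{def:module-reduce} we have $av\equiv v'\pmod{\ideal G}=M$, so $\rho(v')=\overline{av}=a\,\rho(v)$. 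Thus the $A$-action on $\TG$ transported from $L/M$ via $\rho$ sends $(a,v)$ to $v'$; it then remains to check that this transported action agrees with the direct-sum $A$-action defined in step (i) — equivalently, that reducing $a\Xtil$ modulo $G$ gives exactly the component-wise product $(a\bmod\IX)\Xtil$ in $\AmX\Xtil$. This is a direct computation using $\p^{m_X}\Xtil\in M$ (noted in the text just before \eqref{eq:T_G}): writing $a=a_0+\p^{m_X}b$ with $a_0\in\AmX$, we get $a\Xtil=a_0\Xtil+b(\p^{m_X}\Xtil)\equiv a_0\Xtil\pmod M$, and $a_0\Xtil$ is already totally reduced, hence \emph{is} its own reduction; so the two actions coincide.

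Finally, gradedness: $\rho$ is induced by the projection $L=\oplusa L_i\to L/M=\oplusa (L/M)_i$, which is a map of graded $A$-modules since $M$ is a graded submodule; and each generator $\Xtil$ of $\TG$ is homogeneous by step (i), so $\TG=\bigoplus_X\AmX\Xtil$ is a graded $A$-submodule-model and $\rho$ sends the degree-$i$ part $(\TG)_i=\bigoplus_X (A^{<m_X})_i\,\Xtil$ (those $\Xtil$-multiples living in degree $i$) into $(L/M)_i$. Combined with the already-established bijectivity of $\rho$, this shows $\rho$ restricts to a bijection in each degree, hence is an isomorphism of graded $A$-modules.

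I expect the main obstacle to be the bookkeeping in step (ii): one must be careful that the $A$-module structure ``on $\TG$'' is not literally the restriction of multiplication in $L$, but the one making $\TG\cong\bigoplus_X A/\IX$, and that the chosen lifts $\Xtil=\x_jh_j$ (depending on the choices of $j$, $\x_j$, $h_j$) genuinely satisfy $\p^{m_X}\Xtil\in M$ so that reduction of $a\Xtil$ behaves as claimed — everything else is formal once that identification is pinned down.
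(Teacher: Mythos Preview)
Your proposal is correct and takes essentially the same approach as the paper, hinging on the key fact $\p^{m_X}\Xtil\in M$ to match the quotient $A$-action on each $\AmX\Xtil$ with that on $L/M$. The paper's proof is simply terser: it notes that $k$-linearity is clear (since $\TG$ sits inside $L$ as a genuine $k$-subspace) and then checks only compatibility with multiplication by $\p$, splitting into the cases $i<m_X-1$ (trivial) and $i=m_X-1$ (where your decomposition $a=a_0+\p^{m_X}b$ specializes), while leaving the grading implicit.
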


\begin{proof}
It is clear that  $\rho$  is compatible with 
addition and $k$-multiplication. 
It remains to check that it is compatible with 
multiplication by  $\p$. 
It is enough to check this on each  $\AmX\Xtil$. 
This is clear on  $A_i\Xtil$  if  $i<m_X-1$. 
If  $f\in A_{m_X-1}\Xtil$, then we have  
$\p f=0$  (because  $\p A_{m_X-1}=0$  in  
$\AmX=A/(\oplus_{i\geq m_X}A_i)$)  and  
$\p\rho(f)=0$  
(because  $\p^{m_X}\Xtil\in M$  by the definition of  $\Xtil$). 
\end{proof}

This together with the presentation  
(\ref{eq:T_G}) implies the equivalence of (a) and (b) 
in the next theorem:

%Note that the invariant  $m_X$  depends on the choice 
%of the Gr\"obner bases $G$. 

%%%%%%%%%%%%%%%%%%%%%%%%%%%%%%%%%%%%%%%%%%%% 定理：L/M の平坦性判定条件
\begin{theorem}
\label{thm:flat}
The following conditions 
on the $A$-module  $L/M$  are equivalent: 

\sn
$\mathrm{(a)}$ 
$L/M$  is flat over  $A$;

\sn
$\mathrm{(b)}$ 
$\{m_X|\ X\in\XXe\}\subset\{0,a\}$;

\sn
$\mathrm{(c)}$ 
For any  $j$, there exists  $j'$  such that 
$c_{j'}$  is a unit element in  $A$  and  $X_{j'}\mid X_j$.

\sn
Furthermore, we have 
$$
   \rank_A(L/M)\ =\ \#\{X\in\XXe\ |\ m_X = a\}
$$  
if  $L/M$  is flat over  $A$.
\end{theorem}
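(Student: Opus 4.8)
The plan is to run the chain of equivalences (a)~$\Leftrightarrow$~(b)~$\Leftrightarrow$~(c) and then read the rank formula off the explicit normal form. The equivalence (a)~$\Leftrightarrow$~(b) is already at hand: by Lemma~\ref{lem:T_G=L/M} and the presentation~(\ref{eq:T_G}) there is an isomorphism of $A$-modules $L/M\cong\TG=\bigoplus_{X\in\XXe}\AmX\Xtil$, and since a direct summand of a flat module is flat and a direct sum of flat modules is flat, $\TG$ is flat over $A$ if and only if each $\AmX$ is. Now $\AmX$ equals the zero module when $m_X=0$, equals $A$ itself when $m_X=a$, and is a proper non-zero quotient of $A$ when $0<m_X<a$; since a proper non-zero quotient of the (truncated) discrete valuation ring $A$ is never flat, $\TG$ is flat exactly when every $m_X$ lies in $\{0,a\}$, which is (b). (This is the equivalence already announced before the statement.)

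For (b)~$\Leftrightarrow$~(c) the key observation is that each $c_j=\lc(g_j)$ is non-zero, hence $v(c_j)<a$. Combined with formula~(\ref{eq:mX}) this shows that, for a given $X\in\XXe$, one has $m_X=a$ precisely when no $X_j$ divides $X$, and $m_X=0$ precisely when some $X_j\mid X$ with $c_j\in A^\times$. Thus (b) is equivalent to the assertion that whenever some $X_j$ divides $X$, some $X_{j'}$ divides $X$ with $c_{j'}$ a unit. Applying this to $X=X_j$ gives (c). Conversely, if (c) holds and $X_j\mid X$, then the index $j'$ furnished by (c) satisfies $X_{j'}\mid X_j\mid X$ with $c_{j'}$ a unit, so $m_X=0$ and (b) follows.

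Finally, suppose $L/M$ is flat, so $m_X\in\{0,a\}$ for all $X$. In $\TG=\bigoplus_{X\in\XXe}\AmX\Xtil$ the summands with $m_X=0$ vanish while each summand with $m_X=a$ equals $A\Xtil\cong A$; hence $L/M\cong\TG$ is a free $A$-module with basis indexed by $\{X\in\XXe\mid m_X=a\}$, and tensoring with $k=A/\m$ gives $\rank_A(L/M)=\dim_k\bigl((L/M)\otimes_A k\bigr)=\#\{X\in\XXe\mid m_X=a\}$, with the convention that both sides are $\infty$ when this set is infinite.

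There is no serious obstacle here, since the substantive work is already carried by Theorem~\ref{thm:normal form} and Lemma~\ref{lem:T_G=L/M}; the whole argument is then a matter of reading off consequences of the isomorphism $L/M\cong\TG$ and the decomposition~(\ref{eq:T_G}). The points that need care are the use of $c_j\neq 0$ (so $v(c_j)<a$) in translating between the numerical condition (b) and the divisibility condition (c), the elementary fact that a proper non-zero quotient of $A$ is not $A$-flat (which should be stated uniformly for both $a<\infty$ and $a=\infty$), and phrasing the rank formula so that it remains correct when $a=\infty$ or when $\XXe$ contributes an infinite index set.
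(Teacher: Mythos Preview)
Your proposal is correct and follows essentially the same route as the paper: the equivalence (a)~$\Leftrightarrow$~(b) is read off from Lemma~\ref{lem:T_G=L/M} and the decomposition~(\ref{eq:T_G}), and for (b)~$\Leftrightarrow$~(c) you use precisely the paper's argument via $m_{X_j}\neq a$ (from $c_j\neq 0$) to force $m_{X_j}=0$ under~(b), and the transitivity $X_{j'}\mid X_j\mid X$ for the converse. Your treatment is slightly more explicit about why $\AmX$ is flat iff $m_X\in\{0,a\}$ and about the rank formula, but the substance is identical.
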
 

%%%%%%%%%%%%%%%%%%%%%%%%% proof
\begin{proof}
It remains to show the equivalence of the conditions (b) and (c). 
Note that  $m_{X_j}$  cannot be  $a$  for any  $j$  
because  $c_{j'}\not=0$  for all  $j'$.
Then the assumption (b) implies that  
$m_{X_j}=0$  for any  $j$. 
This together with the definition of  $m_{X_j}$  implies  (c). 
Conversely, 
assume  (c)  and let  $X\in\XXe$. 
If  $m_X\not=a$, then we have  $X_j\mid X$  for some $j$. 
By the assumption, there exists  $j'$  such that 
$c_{j'}$  is a unit and  $X_{j'} \mid X_j$,   
and hence  $m_X=v(c_{j'})=0$. 
The statement on the rank follows from 
(\ref{eq:T_G}) and Lemma \ref{lem:T_G=L/M}.
\end{proof}

If we further assume that the Gr\"obner basis  $G$  is minimal, 
then there are no divisibility relations between the 
leading terms  $\lt(g_j)=c_jX_j$, and hence 
the condition (c) in the above theorem means that 
all  $c_j$  are units. 
Thus we deduce the following:

%%%%%%%%%%%%%%%%%%%%%%%%%%%%%%%%%%%%%%%%%%%%%%%% cor
\begin{corollary}
\label{cor:flat}
If the Gr\"obner basis  $G = \{g_1,\ldots ,g_t\}$  for  $M$  is minimal, 
then   $L/M$  is flat over  $A$  
if and only if  $c_j = \lc(g_j)$  is a unit element in  $A$  for every 
$1\le j\le t$. 
\end{corollary}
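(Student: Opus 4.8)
The plan is to derive Corollary~\ref{cor:flat} directly from the equivalence (a)$\Leftrightarrow$(c) in Theorem~\ref{thm:flat}, using minimality of $G$ to simplify condition (c). First I would observe that, since $G=\{g_1,\ldots,g_t\}$ is minimal, there are no divisibility relations among the leading terms $\lt(g_j)=c_jX_j$; this is exactly the remark recorded just after Definition~\ref{def:minimal} (and it is also spelled out in the paragraph following the proof of Theorem~\ref{thm:flat}). In particular, for each fixed $j$, the only index $j'$ with $X_{j'}\mid X_j$ is $j'=j$ itself.

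Next I would run through the two implications. For the ``if'' direction, suppose every $c_j$ is a unit; then taking $j'=j$ in condition (c) shows (c) holds trivially, so by Theorem~\ref{thm:flat} the module $L/M$ is flat over $A$. For the ``only if'' direction, suppose $L/M$ is flat; then condition (c) holds, so for each $j$ there is some $j'$ with $c_{j'}\in A^\times$ and $X_{j'}\mid X_j$. By the minimality observation just made, $X_{j'}\mid X_j$ forces $j'=j$, and hence $c_j=c_{j'}$ is a unit. This proves the stated equivalence.

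Honestly, there is no serious obstacle here: the corollary is a more or less immediate specialization of the theorem once the ``no divisibility among leading terms'' property of minimal Gröbner bases is invoked. The only point requiring a flicker of care is making sure the degenerate cases are handled — in particular that $t\geq 1$ (which holds since $M\neq 0$ by hypothesis in this section) and that the quantifier ``for each $j$'' in condition (c) is matched correctly against the finite index set $\{1,\ldots,t\}$. Since the paper has already done the substantive work in proving (b)$\Leftrightarrow$(c), the write-up can be as short as the paragraph already present in the excerpt immediately before the corollary, which in fact constitutes the proof; one could simply remark ``This is immediate from Theorem~\ref{thm:flat}(c) and the fact that a minimal Gröbner basis has no divisibility relations among its leading terms,'' and leave it at that.
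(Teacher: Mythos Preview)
Your approach is exactly the paper's: deduce the corollary from condition~(c) of Theorem~\ref{thm:flat} together with the fact that a minimal Gr\"obner basis admits no divisibility relations among its leading terms. One small inaccuracy, though: minimality forbids divisibility among the \emph{leading terms} $\lt(g_j)=c_jX_j$, not among the leading power products $X_j$ by themselves, so your intermediate claim that ``the only $j'$ with $X_{j'}\mid X_j$ is $j'=j$'' is not literally true (for instance, over $A=k[\p]/(\p^3)$ the set $\{\p^2x,\ \p x^2\}$ is a minimal homogeneous Gr\"obner basis in $A[x]$, yet $x\mid x^2$). In your ``only if'' step, however, you also know that $c_{j'}$ is a unit, whence $c_{j'}\mid c_j$ in $A$ and therefore $\lt(g_{j'})\mid\lt(g_j)$; it is \emph{this} divisibility of leading terms that forces $j'=j$. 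With that adjustment the argument is complete and coincides with the paper's.
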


%%%%%%%%%%%%%%%%%%%%%%%%%%%%%%%%%%%%%%%%%%%%%%%%%%%%%%%%%%%%%%%%
%%%%%%%%%%%%%%%%%%%%%%%%%%%%%%%%%%%%%%%%%%%%%%%%%%%%%% Section 4
%%%%%%%%%%%%%%%%%%%%%%%%%%%%%%%%%%%%%%%%%%%%%%%%%%%%%%%%%%%%%%%%
\section{Flatness of modules over a tdvr}
\label{sec:tdvr}
First we recall some basic notions on tdvr's 
from \cite{HT1} and \cite{HT2}. 
%%%%%%%%%%%%%%%%%%%%%%%%%%%%%%%%%%%%%%%%%%%%%%%%% Review of tdvr
A {\it tdvr}\/ is an Artinian local ring whose maximal ideal is 
generated by one element. 
The {\it length}\/ of a tdvr  $A$  is the length of  
$A$  as an  $A$-module. 
If  $\O$  is a complete discrete valuation ring and  
$\m$  is its maximal ideal, 
then  $\O/\m^a$  is a tdvr for any integer  $a\geq 1$. 
Conversely, it is known that any tdvr is a quotient of a 
complete discrete valuation ring 
(\cite{HT1}, Prop.\ 2.2). 
A complete discrete valuation ring may naturally be thought of 
as a tdvr of length  $\infty$.
By abuse of terminology, however, 
we call also a discrete valuation ring which 
{\it may not necessarily be complete} 
a tdvr of length  $\infty$, because the theory below 
applies to any discrete valuation ring as well. 
It is known that a tdvr  $A$  is principal, 
and any ideal is of the form  $\m^i$  for some  $i\geq 0$, 
where  $\m$  is the maximal ideal of  $A$.  
Hence, the graded ring $\gr(A) = \oplusa \m^i/\m^{i+1}$  
is a graded tdvr of length equal to  $a=\length(A)$. 
 
%%%%% tdvr の付値は今のところ未使用
%Any generator  $\pi$  of the maximal ideal  $\m$  is said to be 
%a {\it uniformizer} of  $A$. 
%Any element  $x$  of a tdvr  $A$  of length  $a$  
%can be written as  
%$x=u\pi^i$  with  
%$u\in A^\times$, 
%$\pi$  a uniformizer of  $A$,  and  
%$0\leq i\leq a$
%(with the convention  $0^0=1$  if  $a=1$). 
%The exponent  $i\leq a$  is unique 
%(we regard  $0=\pi^\infty$  if  $a=\infty$); 
%we denote it by  $v(x)$  and call it the {\it valuation} of  $x$. 
%Thus we have a map  
%$v:A\to\{0,\ldots,a\}$; we have  
%$v(x)=0$  if and only if  $x$  is a unit of  $A$, 
%$v(x)=1$  if and only if  $x$  is a uniformizer of  $A$, and 
%$v(x)=a$  if and only if  $x=0$.

In this section, our base ring  $A$  is a tdvr; 
let  $\m$  be its maximal ideal and  $a$  its length. 
Let  $R$  be the polynomial ring  $A[x_1,\cdots,x_n]$,  and  
let  $L$  be the free $R$-module  
$Re_1\oplus\cdots\oplus Re_r$. 
Suppose we are given an $R$-submodule  $M$  of  $L$.
We are interested in the flatness over  $A$  of the 
quotient module  $L/M$. 
It is known 
(\cite{Bou98}, Chap.\ III, Sect.\ 5.2; \cite{Matsumura}, Thm.\ 22.3)
that  $L/M$  is flat over  $A$  if and only if 
its associated graded module  $\grLM$  is flat over  $\grA$, 
to which the results in the previous section can be applied. 
Here, the graded objects are defined by using the 
$\m$-adic filtration. Precisely stating, we define  
$\grA =\oplusa\griA$  and  
$\grLM=\oplusa\griLM$  with
$\griA :=\m^i/\m^{i+1}$   and   
$\griLM:=\m^i(L/M)/\m^{i+1}(L/M)
\simeq\m^iL/(M+\m^{i+1}L)$, 
respectively. 
The latter is a graded  $\grR$-module, where  
$\grR=\oplusa\griR$  is the graded $\grA$-algebra with
$\griR:=\m^iR/\m^{i+1}R$. 
Note that we have  
$\grLM\simeq\grL/\grM$, where  
$\grL=\oplusa\griL$  is the graded $\grR$-module with 
$\griL:=\m^iL/\m^{i+1}L$, and 
$\grM=\oplusa\griM$  is the graded  $\grR$-submodule of  $\grL$  
with  
$\griM:=(M\cap\m^iL)/(M\cap\m^{i+1}L)$. 
The set  $\XXe$  of power products in  $L$  may naturally be 
regarded as a $\grA$-basis of  $\grL$, so that we identify  
$\grL$  with  $\oplus_{X\in\XXe}\grA X$.

Now our criterion for the flatness of  $L/M$  can be 
stated as follows: 
Given an $R$-submodule  $M$  of  $L$, 
find a set of generators of  $\grM$, 
find a Gr\"obner basis  $G=\{g_1,\ldots,g_t\}$  of  $\grM$,
write  $g_j=c_jX_j+\cdots$  with  
$c_jX_j=\lt(g_j)$  and, using these  $g_j$,  
calculate the invariant  $m_X$  for each  $X\in\XXe$  
as in Section \ref{sec:flat} 
(see (\ref{eq:mX})). 
Then the non-graded version of Theorem \ref{thm:flat} 
holds as follows:

%%%%%%%%%%%%%%%%%%%%%%%%%%%%%%%%%%%%%%%%%%%% 定理：L/M の平坦性判定条件
\begin{theorem}
\label{thm:criterion}
The following conditions 
on the $A$-module  $L/M$  are equivalent: 

\sn
$\mathrm{(a)}$ 
$L/M$  is flat over  $A$;

\sn
$\mathrm{(b)}$ 
$\{m_X|\ X\in\XXe\}\subset\{0,a\}$;

\sn
$\mathrm{(c)}$ 
For any  $j$, there exists  $j'$  such that 
$c_{j'}$  is a unit element in  $\grA$  and  $X_{j'}\mid X_j$.

\sn
If the Gr\"obner basis  $G$  is minimal, 
these conditions are equivalent also to:

\sn
$\mathrm{(c')}$ 
The leading coefficient  $c_j$  of  $g_j$  is a unit element in  
$\grA$  for every  $1\le j\le t$. 

%\sn
%Furthermore, we have 
%$$
%   \rank_A(L/M)\ =\ \#\{X\in\XXe\ |\ m_X = a\}
%$$  
%if  $L/M$  is flat over  $A$.
\end{theorem}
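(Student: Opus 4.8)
The plan is to reduce Theorem~\ref{thm:criterion} to its graded counterpart, Theorem~\ref{thm:flat}, via the standard fact already quoted in the text: $L/M$ is flat over the tdvr $A$ if and only if $\grLM$ is flat over $\grA$ for the $\m$-adic filtration. Since $\grA$ is a graded tdvr of length $a=\length(A)$ and $\grLM\simeq\grL/\grM$ with $\grL=\oplus_{X\in\XXe}\grA X$ a free graded $\grR$-module, Theorem~\ref{thm:flat} applies verbatim to the pair $(\grL,\grM)$ once we have a homogeneous Gr\"obner basis for $\grM$. So the first step is simply to invoke the Lemma guaranteeing that $\grM$ has a homogeneous Gr\"obner basis, then take the $G=\{g_1,\ldots,g_t\}$ furnished there; the invariants $m_X$ attached to this $G$ are exactly those appearing in the statement.

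With that in place, the equivalence of (a) and (b) is immediate: (a) for $L/M$ over $A$ $\iff$ $\grLM$ flat over $\grA$ (the cited result from \cite{Bou98}, \cite{Matsumura}) $\iff$ (b) of Theorem~\ref{thm:flat} applied to $\grL/\grM$, which is precisely the condition $\{m_X\mid X\in\XXe\}\subset\{0,a\}$ stated here as (b). Likewise (b)$\iff$(c) is literally the content of the (b)$\iff$(c) part of Theorem~\ref{thm:flat}, now read for the graded ring $\grA$; no new argument is needed beyond observing that ``unit in $\grA$'' is the correct translation of ``$v(c_{j'})=0$''. Finally, when $G$ is a minimal Gr\"obner basis, there are no divisibility relations among the leading terms $\lt(g_j)=c_jX_j$, so in condition (c) the only admissible choice is $j'=j$; hence (c) becomes the assertion that every $c_j$ is a unit in $\grA$, which is (c$'$). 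This is exactly the passage from Theorem~\ref{thm:flat} to Corollary~\ref{cor:flat}, transcribed to the present setting.

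The one point requiring a sentence of care --- and the only place where anything could go wrong --- is the compatibility of the two uses of the symbol $\XXe$: in Section~\ref{sec:flat} it indexes an $A$-basis of a free module over a polynomial ring with coefficients in a graded tdvr, while here it must be identified with a $\grA$-basis of $\grL$. The text has already arranged this identification ($\grL\simeq\oplus_{X\in\XXe}\grA X$, and $\grR$ is the polynomial ring over $\grA$ on the $x_i$), so the term order on $\XXe$ and the notions of $\lp,\lc,\lt$ transport without change, and the construction of $\TG$ and the definition \eqref{eq:mX} of $m_X$ are literally the ones used in Section~\ref{sec:flat}. Thus the proof is essentially: ``apply Theorem~\ref{thm:flat} (resp.\ Corollary~\ref{cor:flat}) to $\grL/\grM$ and combine with the flatness-descends-to-the-associated-graded criterion.'' I would write it in three or four lines, flagging only that the Gr\"obner basis in question is a homogeneous one for $\grM$ so that the hypotheses of the graded results are met, and that the rank formula of Theorem~\ref{thm:flat} also carries over to give $\rank_A(L/M)=\#\{X\in\XXe\mid m_X=a\}$ in the flat case, should one wish to state it here as well.
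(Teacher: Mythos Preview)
Your proposal is correct and follows essentially the same approach as the paper: the text preceding the theorem already sets up the reduction by citing that $L/M$ is flat over $A$ iff $\grLM$ is flat over $\grA$, identifies $\XXe$ with a $\grA$-basis of $\grL$, and instructs the reader to compute the $m_X$ from a Gr\"obner basis of $\grM$ exactly as in Section~\ref{sec:flat}; the theorem is then stated without a separate proof environment, as an immediate consequence of Theorem~\ref{thm:flat} and Corollary~\ref{cor:flat}. Your write-up makes the same moves, with the added care of spelling out why the minimal case forces $j'=j$ and noting the rank formula (which the paper places in the subsequent Remark, restricted to finite length).
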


%%%%%%%%%%%%%%%%%%%%%%%%%%%%%% remark
\begin{remark}
It is known that 
a flat module over a local ring with {\it nilpotent} maximal ideal 
is free (\cite{Matsumura}, Th.\ 7.10).  
Hence we have  
$$
   \rank_A(L/M)\ =\ \rank_{\grA}(\gr(L/M))
               \ =\ \#\{X\in\XXe\ |\ m_X = a\}
$$  
if  $L/M$  is flat over a tdvr  $A$  of {\it finite} length.
\end{remark}

A finite subset of $M\ssm \{0\}$ 
is said to be a {\it standard basis} for $M$ 
if its image in $\grM$ generates $\grM$. 
We have an algorithm to compute standard bases (\Cf \cite{Shibuta}).  
It outputs a set of generators of $\gr(M)$ if 
a set of generators of $M$ is given as input data. 
By running Buchberger's algorithm (\cite{AL94}, \cite{Lez08}), 
we obtain a Gr\"obner basis for $\grM$, 
and thus the above criterion is applicable.

\providecommand{\bysame}{\leavevmode\hbox to3em{\hrulefill}\thinspace}
\providecommand{\href}[2]{#2}

%%%%%%%%%%%%%%%%%%%%%%%%%%%%%%%%%%%%%%%%%%%%%%%%%%%%%%%%%%%%%
\bn
Toshiro Hiranouchi \\
Research Institute for Mathematical Sciences, Kyoto University, \\
Kyoto 606-8502 Japan,\\ 
Email address: {\tt hira@kurims.kyoto-u.ac.jp }

\mn
Yuichiro Taguchi \\
Graduate School of Mathematics, Kyushu University 33, \\
Fukuoka 812-8581, Japan \\
Email address: {\tt taguchi@math.kyushu-u.ac.jp}
\end{document}